\newtheorem{theorem}{Theorem}
\newtheorem{lemma}[theorem]{Lemma}
\title{{\bf Explicit formulae for the mean value of products of values of Dirichlet $L$-functions\\ 
at positive integers}}
\author{
St\'ephane R. LOUBOUTIN\\
Aix Marseille Universit\'e, CNRS, Centrale Marseille, I2M,\\ 
Marseille, France\\
stephane.louboutin@univ-amu.fr}
\date{\today}
\begin{document}
\bibliographystyle{alpha}
\maketitle

\centerline{To appear in Proc. Amer. Math. Soc.}
\footnotetext{
1991 Mathematics Subject Classification. 
Primary 11M06, 11M20, 33B10.

Key words and phrases. Dirichlet $L$-functions, characters, cotangent, mean value.}

\begin{abstract}
Let $m\ge 1$ be a rational integer. 
We give an explicit formula for the mean value 
$$\frac{2}{\phi(f)}\sum_{\chi (-1)=(-1)^m}\vert L(m,\chi )\vert^2,$$ 
where $\chi$ ranges over the $\phi (f)/2$ Dirichlet characters modulo $f>2$ with the same parity as $m$. 
We then adapt our proof to obtain explicit means values for products of the form
$L(m_1,\chi_1)\cdots L(m_{n-1},\chi_{n-1})\overline{L(m_n,\chi_1\cdots\chi_{n-1})}$.
\end{abstract}

\section{Introduction}
For $m\geq 1$ and $f>2$, let 
$$M(m,f)
=\frac{2}{\phi (f)}
\sum_{\chi (-1)=(-1)^m}\vert L(m,\chi )\vert^2$$
denote the mean value of $\vert L(m,\chi )\vert^2$, 
where $\chi$ ranges over the $\phi (f)/2$ Dirichlet characters modulo $f$ such that $\chi (-1) =(-1)^m$, 
i.e. over the Dirichlet characters modulo $f$ of the same parity as $m$. 

The first step towards the study of these $M(m,f)$'s was made in \cite{Wal}, 
where the formula for $M(1,f)$ was obtained, but only for prime moduli $f$. 
Secondly, in \cite{Qi} and \cite{Lou93} the formula for $M(1,f)$ for non necessarily prime moduli $f$ 
was obtained. 
Thirdly, in \cite{LouCollMath90} a method was developed for obtaining recursively 
a formula for $M(m,f)$ for a given $m$. 
This method was applied to obtain the formulae below for $M(m,f)$ for $m=1$, $2$, $3$ and $4$. 
Fourthly, in \cite{Lin} the author developed another way for obtaining recursively 
a formula for $M(m,f)$ for a given $m$. 
S. Louboutin and X. Lin's methods are easy to use with any software for mathematical computation 
for obtaining a formula for $M(m,f)$ for a given $m$. 
But they did not give an explicit formula for these $M(m,f)$'s. 
Fifthly, in \cite{LZ} the authors gave a rather complicated but explicit formula for $M(m,f)$ 
and the mean value of the product of the values of two Dirichlet $L$-functions at positive integers 
of the same parity.

Here, inspired by \cite[Proof of Theorem 2.1]{BY}, 
we obtain in Theorem \ref{mainresult} an explicit formula for these $M(m,f)$'s. 
Then, we extend our method developed to obtain Theorem \ref{mainresult} to get in Theorem \ref{mainresultgeneralization} an explicit formula for the mean value of more general products of the form 
$L(m_1,\chi_1)\cdots L(m_{n-1},\chi_{n-1})\overline{L(m_n,\chi_1\cdots\chi_{n-1})}$. 
In particular, for $n=2$ we obtain in Theorem \ref{M(m,n;f)} a much simpler formula 
than the one given in \cite[Theorem 1.1]{LZ}
for the mean value of the product of the values of two Dirichlet $L$-functions at positive integers 
of the same parity.

\begin{theorem}\label{mainresult}
For $m=1$ we have
\begin{equation}\label{M(1,f)}
M(1,f)
=\frac{\pi^2}{6}\left (\phi_2(f)-3\frac{\phi_1(f)}{f}\right )
\end{equation} 
and for $m\geq 2$ we have
$$M(m,f)
=\zeta(2m)
\times\Biggl (\phi_{2m}(f)
+(-1)^{m-1}{2m\choose m}\sum_{k=1}^{\lfloor m/2\rfloor}
\frac{m}{m-k}{m\choose 2k}\frac{B_{2k}B_{2(m-k)}}{B_{2m}}
\frac{\phi_{2k}(f)}{f^{2m-2k}}
\Biggr ),$$
where $\phi_l(f)=\prod_{p\mid f}(1-1/p^l)$ 
and the Bernoulli rational numbers $B_k$ are given by 
$$\frac{t}{e^{t}-1}
=\sum_{k\geq 0}\frac{B_{k}}{k!}t^{k}.$$
In particular, $M(m,f)$ is asymptotic to $\zeta(2m)\phi_{2m}(f)$ 
as $f$ goes to infinity.
\end{theorem}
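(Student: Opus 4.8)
The plan is to reduce $M(m,f)$ to a sum of squares of values of the cotangent‑type function $g_m(x):=\sum_{n\in\mathbb{Z}}(x+n)^{-m}$, to evaluate that sum by a partial‑fraction expansion combined with Möbius inversion over the divisors of $f$, and finally to rewrite the resulting values of $\zeta$ in terms of Bernoulli numbers.

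\emph{Reduction step.} For a Dirichlet character $\chi$ mod $f$ with $\chi(-1)=(-1)^m$, grouping the terms $n\equiv a\ (f)$ gives $L(m,\chi)=f^{-m}\sum_{a=1}^{f}\chi(a)\zeta(m,a/f)$ (the term $a=f$ being zero); symmetrizing via $\chi(f-a)=(-1)^m\chi(a)$ and the identity $\zeta(m,x)+(-1)^m\zeta(m,1-x)=g_m(x)$ yields $L(m,\chi)=\frac{1}{2f^m}\sum_{a=1}^{f-1}\chi(a)g_m(a/f)$. Since the numbers $g_m(a/f)$ are real, I expand $|L(m,\chi)|^2$, sum over the $\phi(f)/2$ characters of parity $(-1)^m$ using $\sum_{\chi(-1)=(-1)^m}\chi(a)\overline{\chi(b)}=\frac{\phi(f)}{2}\bigl(\mathbf{1}_{a\equiv b}+(-1)^m\mathbf{1}_{a\equiv-b}\bigr)$ (valid for $\gcd(ab,f)=1$), and invoke $g_m(1-x)=(-1)^mg_m(x)$ to see that the two resulting sums coincide, obtaining
$$M(m,f)=\frac{1}{2f^{2m}}\sum_{\substack{1\le a\le f-1\\ \gcd(a,f)=1}}g_m(a/f)^2.$$

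\emph{Evaluation step.} Stripping the coprimality condition by Möbius inversion gives $\sum_{\gcd(a,f)=1}g_m(a/f)^2=\sum_{d\mid f}\mu(d)S_m(f/d)$, with $S_m(N):=\sum_{b=1}^{N-1}g_m(b/N)^2$. For $m\ge2$ the series converge absolutely, and writing $g_m(b/N)=N^m\sum_{n\equiv b\ (N)}n^{-m}$ turns $S_m(N)$ into $N^{2m}\sum_{n\equiv n'\ (N),\ N\nmid n}(nn')^{-m}$. I split this double sum according to $l=(n'-n)/N$: the term $l=0$ contributes $2\zeta(2m)N^{2m}$, and for each $l\ne0$ the classical partial‑fraction expansion of $n^{-m}(n+Nl)^{-m}$, summed over $n$ and then over $l\ne0$, contributes only constants together with terms proportional to $\zeta(j)\zeta(2m-j)\binom{2m-j-1}{m-1}N^{j}$ for $1\le j\le m$, the odd‑$j$ ones vanishing since $\zeta$ at odd arguments enters solely through a factor $1+(-1)^j=0$. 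Because $f>1$, the outer sum $\sum_{d\mid f}\mu(d)(\cdot)$ annihilates every constant and sends $N^{r}$ to $f^r\phi_r(f)$ (as $\sum_{d\mid f}\mu(d)d^{-r}=\phi_r(f)$); collecting terms and substituting $\zeta(2j)=(-1)^{j-1}(2\pi)^{2j}B_{2j}/(2(2j)!)$ produces the stated formula, the passage between the two forms resting on the elementary identity $\binom{2m-2k-1}{m-1}/\bigl((2k)!\,(2m-2k)!\bigr)=\binom{2m}{m}\frac{m}{m-k}\binom{m}{2k}/\bigl(2(2m)!\bigr)$. The case $m=1$ I treat separately via the classical evaluation $\sum_{a=1}^{N-1}\cot^2(\pi a/N)=(N-1)(N-2)/3$, i.e. $S_1(N)=\frac{\pi^2}{3}(N^2-3N+2)$, whence the same Möbius step gives $M(1,f)=\frac{\pi^2}{6}(\phi_2(f)-3\phi_1(f)/f)$. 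The asymptotic assertion is then immediate, since $\phi_{2m}(f)\ge 1/\zeta(2)$ while each correction term is $O(f^{-2})$ or smaller.

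\emph{Main difficulty.} The delicate part is the evaluation step: carrying out the partial‑fraction expansion so as to isolate correctly the genuinely constant contributions (which disappear under the Möbius inversion precisely because $f>1$), handling the conditionally convergent $j=1$ pieces — and, for $m=1$, the entire series — and then checking the binomial identity that reconciles the $\zeta$‑form of the answer with the Bernoulli‑number form in the statement.
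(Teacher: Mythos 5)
Your proposal is correct, and while your reduction step is essentially the paper's (the identity $L(m,\chi)=\frac{1}{2f^m}\sum_a\chi(a)g_m(a/f)$ is, up to normalization, the cotangent formula $L(m,\chi)=\frac{(-1)^{m-1}\pi^m}{2f^m(m-1)!}\sum_k\chi(k)\cot^{(m-1)}(\pi k/f)$, and the orthogonality-plus-M\"obius step is Lemma \ref{M(m,f)} verbatim), your evaluation of $S_m(N)=\sum_{b=1}^{N-1}g_m(b/N)^2$ is genuinely different. The paper evaluates these sums by the residue theorem: integrating $d\cot(dz)\bigl(\cot^{(m-1)}(z)\bigr)^2$ over a tall rectangle, so that $S_m(d)$ is read off from the residue at $z=0$, i.e.\ from one coefficient of a product of explicit power series, which delivers the Bernoulli-number form at once. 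You instead expand $g_m(b/N)^2$ as the double Dirichlet series $N^{2m}\sum_{n\equiv n'\,(N),\,N\nmid n}(nn')^{-m}$, split along $l=(n'-n)/N$, and use the classical partial-fraction expansion of $1/(n^m(n+Nl)^m)$, obtaining the answer in terms of products $\zeta(2k)\zeta(2m-2k)$; the binomial identity you quote (which is correct: both sides reduce to $1/\bigl(2(m-k)(m-1)!(m-2k)!(2k)!\bigr)$) then reconciles this with the stated Bernoulli form, and I checked your scheme reproduces, e.g., $S_2(N)=\frac{\pi^4}{45}(N^4+10N^2-11)$ and hence $M(2,f)$. What each route buys: yours is more elementary (no contour integration), at the price of the bookkeeping you yourself flag — the conditionally convergent $j=1$ pieces, the constants killed only because $f>1$, and the $m=1$ case, where the Hurwitz-zeta derivation of the reduction formula needs a regularization (Abel summation or the classical Gauss-sum argument), which you sidestep by invoking $\sum_{a=1}^{N-1}\cot^2(\pi a/N)=(N-1)(N-2)/3$. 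The paper's residue method, by contrast, handles $m=1$ and $m\ge 2$ uniformly and, more importantly, extends with no extra effort to the $n$-fold products $L(m_1,\chi_1)\cdots\overline{L(m_n,\chi_1\cdots\chi_{n-1})}$ of Theorem \ref{mainresultgeneralization}, where your diagonal-plus-partial-fractions analysis of a square would become an $n$-fold convolution and considerably more painful.
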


For example, in accordance with \cite[Theorem 2]{Lou93} and \cite[Page 371]{Lin} we have:
\begin{eqnarray*}
M(2,f)
&=&\frac{\pi^4}{90}\left (\phi_4(f)+10\frac{\phi_2(f)}{f^2}\right ),\\
M(3,f)
&=&\frac{\pi^6}{945}\left (\phi_6(f)-21\frac{\phi_2(f)}{f^4}\right ),\\
M(4,f)
&=&\frac{\pi^8}{9450}\left (\phi_8(f)+\frac{14}{3}\frac{\phi_4(f)}{f^4}+\frac{200}{3}\frac{\phi_2(f)}{f^6}\right ),\\
M(5,f)
&=&\frac{\pi^{10}}{93555}\left (\phi_{10}(f)-22\frac{\phi_4(f)}{f^6}-231\frac{\phi_2(f)}{f^8}\right ).
\end{eqnarray*}

\section{Proof of Theorem \ref{mainresult}}
Theorem \ref{mainresult} readily follows from the well known formula 
\begin{equation}\label{zeta(2m)}
\zeta(2m)
=(-1)^{m-1}\frac{(2\pi)^{2m}B_{2m}}{2\cdot (2m)!}.
\end{equation}
and from following Lemmas \ref{M(m,f)}, \ref{Rk} and \ref{TmSm}.

\begin{lemma}\label{M(m,f)}
(See \cite[Proposition 3 and Proof of Proposition 5]{LouCollMath90}).
Let $m\ge 1$ and $f>2$ be rational integers. 
Let $\cot^{(k)}$ denote the $k$th derivative of
$\cot:=\cos/\sin$. 
We have 
$$M(m,f)
=\frac{\pi^{2m}}{2\cdot ((m-1)!)^2f^{2m}}
\sum_{\substack{d\mid f\\ d>1}}
\mu (f/d)S_m(d),$$
where 
$$S_m(d)
:=\sum_{k=1}^{d-1}
\left (\cot^{(m-1)}(\pi k/d)\right )^2
\ \ \ \ \ (d>1).$$
\end{lemma}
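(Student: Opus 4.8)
The plan is to convert the character sum defining $M(m,f)$ into a cotangent sum through a closed-form expression for $L(m,\chi)$ valid for the characters of the same parity as $m$, then to expand the square and apply orthogonality of the characters of a fixed parity, and finally to simplify the resulting cotangent sum by Möbius inversion over the divisors of $f$.

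\textbf{Step 1: a closed formula for $L(m,\chi)$.} First I would prove that for every Dirichlet character $\chi$ modulo $f$ with $\chi(-1)=(-1)^m$,
$$L(m,\chi)=\frac{(-1)^{m-1}\pi^m}{2(m-1)!\,f^m}\sum_{a=1}^{f-1}\chi(a)\,\cot^{(m-1)}(\pi a/f).$$
Writing $L(m,\chi)=\sum_{n\ge1}\chi(n)n^{-m}$ and grouping $n$ according to its residue $a$ modulo $f$ gives $L(m,\chi)=f^{-m}\sum_{a=1}^{f-1}\chi(a)\,\zeta(m,a/f)$ with $\zeta(m,x)=\sum_{k\ge0}(k+x)^{-m}$ (for $m=1$ one keeps the tails paired symmetrically). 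Using $\chi(f-a)=(-1)^m\chi(a)$ and averaging this with the expression obtained after $a\mapsto f-a$, one is led to the combination $\zeta(m,x)+(-1)^m\zeta(m,1-x)=\sum_{n\in\mathbb Z}(x+n)^{-m}$, and differentiating $m-1$ times the partial-fraction identity $\pi\cot(\pi x)=\sum_{n\in\mathbb Z}(x+n)^{-1}$ turns this bilateral sum into $\frac{(-1)^{m-1}\pi^m}{(m-1)!}\cot^{(m-1)}(\pi x)$, which yields the displayed formula.

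\textbf{Step 2: orthogonality.} Next I would square and sum over the characters of parity $(-1)^m$, using the parity projector $\tfrac12(1+(-1)^m\chi(-1))$ together with $\sum_{\chi\bmod f}\chi(a)\overline{\chi(b)}=\phi(f)$ if $a\equiv b\pmod f$ and $(ab,f)=1$, and $0$ otherwise. This collapses the double sum over $a,b$ to a diagonal contribution ($b\equiv a$) and an anti-diagonal contribution ($b\equiv -a$). On the anti-diagonal I would use $\cot^{(m-1)}(\pi-x)=(-1)^m\cot^{(m-1)}(x)$, immediate from $\cot(\pi-x)=-\cot x$ and the chain rule, which makes the two contributions equal; since $f>2$ the residues $a$ and $f-a$ are distinct whenever $(a,f)=1$, so nothing is double counted. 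The outcome is
$$M(m,f)=\frac{\pi^{2m}}{2\,((m-1)!)^2 f^{2m}}\sum_{\substack{1\le a\le f-1\\(a,f)=1}}\bigl(\cot^{(m-1)}(\pi a/f)\bigr)^2 .$$

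\textbf{Step 3: Möbius inversion, and the main difficulty.} Finally, writing each $a/f$ in lowest terms as $k/d$ (so $d\mid f$, $d>1$, $(k,d)=1$, $1\le k\le d-1$) and noting $\cot^{(m-1)}(\pi a/f)=\cot^{(m-1)}(\pi k/d)$, one obtains $S_m(d)=\sum_{e\mid d}c_m(e)$, where $c_m(e)=\sum_{1\le k\le e-1,\,(k,e)=1}\bigl(\cot^{(m-1)}(\pi k/e)\bigr)^2$ and $c_m(1)=S_m(1)=0$. Möbius inversion then gives $c_m(f)=\sum_{d\mid f}\mu(f/d)S_m(d)=\sum_{d\mid f,\,d>1}\mu(f/d)S_m(d)$, and substituting this for the coprime sum in Step 2 yields exactly the claimed identity. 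I expect Step 1 to be the only delicate point: justifying the rearrangements and convergence when $m=1$, and correctly tracking the sign $(-1)^{m-1}$ and the factor $\tfrac12$; once the closed formula for $L(m,\chi)$ is in place, Steps 2 and 3 are routine orthogonality and elementary divisor bookkeeping.
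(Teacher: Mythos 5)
Your proposal is correct and follows essentially the same route the paper takes: the closed cotangent formula for $L(m,\chi)$ (which the paper simply cites as formula (1) of \cite{LouCollMath90}, and which your Hurwitz-zeta/partial-fraction argument correctly reproves), then orthogonality over characters of fixed parity with the symmetry $\cot^{(m-1)}(\pi-x)=(-1)^m\cot^{(m-1)}(x)$, and finally M\"obius inversion over divisors of $f$ — exactly the computation the paper itself carries out in Section 3 for the generalization Lemma \ref{M(m,f)generalization}. No gaps beyond the $m=1$ convergence care you already flag.
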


\begin{lemma}\label{Rk}
If $R(X)=\sum_{l=0}^{k}r_{k,l}X^l\in {\mathbb Q}[X]$ is such that $R(1)=0$, 
 then for $f>2$ we have 
$$\sum_{\substack{d\mid f\\ d>1}}\mu (f/d)R(d)
=\sum_{d\mid f}\mu (f/d)R(d)
=\sum_{l=1}^{k}r_{k,l}\phi_l(f)f^l.$$
\end{lemma}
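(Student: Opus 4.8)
The plan is to linearize in the coefficients $r_{k,l}$ and reduce everything to the evaluation of the twisted divisor sum $c_l(f):=\sum_{d\mid f}\mu(f/d)d^l$ for each fixed exponent $l\ge0$. First I would dispose of the left-hand equality: the only term by which the two sums differ is $\mu(f/1)R(1)=\mu(f)\cdot 0$, which vanishes by the hypothesis $R(1)=0$, so the $d=1$ term may be inserted freely.

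For the right-hand equality, expand $R(X)=\sum_{l=0}^{k}r_{k,l}X^l$ and interchange the two finite summations to obtain $\sum_{d\mid f}\mu(f/d)R(d)=\sum_{l=0}^{k}r_{k,l}\,c_l(f)$. Now $c_l$ is the Dirichlet convolution of $\mu$ with the completely multiplicative function $d\mapsto d^l$, hence multiplicative; and on a prime power $p^a$ with $a\ge1$ only the two largest divisors survive, giving $c_l(p^a)=p^{al}-p^{(a-1)l}$. Writing $f=\prod_{p\mid f}p^{a_p}$, this yields, for $f>1$ and $l\ge1$,
$$c_l(f)=\prod_{p\mid f}\bigl(p^{a_pl}-p^{(a_p-1)l}\bigr)=f^l\prod_{p\mid f}(1-p^{-l})=\phi_l(f)f^l,$$
whereas $c_0(f)=\sum_{d\mid f}\mu(f/d)=0$ because $f>1$. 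Substituting these values back leaves exactly $\sum_{l=1}^{k}r_{k,l}\phi_l(f)f^l$, which is the claimed identity.

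There is no real obstacle here; the only points meriting a little care are that the hypothesis $R(1)=0$ is used only to justify the first equality, while the closed form in the second equality in fact holds for every $R\in\mathbb{Q}[X]$ since the constant term $r_{k,0}$ contributes nothing once $f>1$, and that the prime-power evaluation of $c_l$ should be obtained from multiplicativity of $\mu*(d\mapsto d^l)$ rather than by assuming $f$ squarefree, as the modulus $f$ arising in Lemma \ref{M(m,f)} is an arbitrary integer $>2$.
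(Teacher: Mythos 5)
Your proof is correct and follows essentially the same route as the paper: the paper's one-line proof is precisely the evaluation $\sum_{d\mid f}\mu(f/d)d^l=f^l\phi_l(f)$, which you establish (via multiplicativity and prime powers rather than the substitution $d\mapsto f/d$), and the remaining steps you spell out — dropping the $d=1$ term using $R(1)=0$ and the vanishing of the $l=0$ contribution for $f>1$ — are exactly what the paper leaves implicit.
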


\begin{proof}
Notice that 
$\sum_{d\mid f}\mu (f/d)d^l 
=\sum_{d\mid f}\mu (d)(f/d)^l 
=f^l\phi_l(f)$.
\end{proof}

\begin{lemma}\label{TmSm}
Set $R_1(X) =(X^2-3X+2)/3$ and for $m\geq 2$ set
$$R_m(X)
=2^{2m}\cdot\left ((-1)^{m-1}\frac{B_{2m}}{m^2{2m\choose m}}X^{2m}
+\sum_{k=0}^{\lfloor m/2\rfloor}
\frac{B_{2k}B_{2(m-k)}}{m(m-k)}{m\choose 2k}X^{2k}
-\frac{1+(-1)^m}{2}\frac{B_m^2}{m^2}
\right ).$$
Then $R_m(1)=0$ for $m\geq 1$ and 
$S_m(d)=R_m(d)$ for $m\geq 1$ and $d\geq 2$.
\end{lemma}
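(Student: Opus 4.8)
The plan is to realise $S_m(d)$ as a residue at $z=0$ of an explicit meromorphic function and then read the polynomial $R_m$ off a Laurent expansion; the parity split between odd and even $m$ (which produces the $\frac{1+(-1)^m}{2}B_m^2$ term) will emerge from which Laurent coefficients collide. Set $g(z):=\bigl(\cot^{(m-1)}(\pi z)\bigr)^2$; since $\cot(\pi z)$ has a simple pole at each integer, $g$ is $1$-periodic, holomorphic on $\mathbb{C}\setminus\mathbb{Z}$, with a pole of order $2m$ at each integer, while $\pi d\cot(\pi dz)$ has a simple pole of residue $1$ at each point $k/d$ with $k\in\mathbb{Z}$, and no other poles.

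First I would integrate $g(z)\,\pi d\cot(\pi dz)$ counterclockwise around the rectangle with vertical sides $\Re z=-\varepsilon$ and $\Re z=1-\varepsilon$ (any $0<\varepsilon<1/d$) and horizontal sides $\Im z=\pm T$. The poles inside are $z=0$ and $z=k/d$ for $k=1,\dots,d-1$; since $g$ is regular at the latter, the residue theorem gives
$$\frac{1}{2\pi i}\oint g(z)\,\pi d\cot(\pi dz)\,dz=S_m(d)+\rho_m(d),\qquad \rho_m(d):=\mathrm{Res}_{z=0}\bigl[g(z)\,\pi d\cot(\pi dz)\bigr].$$
The two vertical sides cancel by periodicity of the integrand. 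Letting $T\to\infty$: for $m\ge2$ the horizontal integrals vanish because $\cot^{(m-1)}(\pi z)\to0$, giving $S_m(d)=-\rho_m(d)$; for $m=1$ one has $\cot^2(\pi z)\to-1$ and $\pi d\cot(\pi dz)\to\mp i\pi d$, so the two horizontal sides contribute $-2\pi i d$ in all and $S_1(d)=-d-\rho_1(d)$. Running the same argument with $d=1$ (only $z=0$ inside) gives $\rho_m(1)=0$ for $m\ge2$ and $\rho_1(1)=-1$; combined with the identification $R_m=-\rho_m$ (resp.\ $R_1(X)=-X-\rho_1(X)$) obtained in the next step, this yields $R_m(1)=0$.

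Next I would compute $\rho_m(d)$ from the Laurent expansions
$$\pi d\cot(\pi dz)=\frac1z-2\sum_{j\ge1}\zeta(2j)\,d^{2j}z^{2j-1},\qquad \cot^{(m-1)}(\pi z)=\frac{1}{\pi^{m}}\Bigl[(-1)^{m-1}(m-1)!\,z^{-m}-2\sum_{j\ge\lceil m/2\rceil}\zeta(2j)\frac{(2j-1)!}{(2j-m)!}\,z^{2j-m}\Bigr].$$
Writing $g(z)=\sum_{l\ge0}b_l z^{2l-2m}$ and extracting the coefficient of $z^{-1}$ from the product pairs each $b_l$ with the matching term of the kernel, so that $\rho_m(d)=b_m-2\sum_{l=0}^{m-1}b_l\,\zeta\bigl(2(m-l)\bigr)d^{2(m-l)}$; since $b_l=0$ for $1\le l<\lceil m/2\rceil$, this $\rho_m(d)$ is supported on $d^{2m}$, on $d^{2k}$ for $1\le k\le\lfloor m/2\rfloor$, and on $d^0$. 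Substituting $\zeta(2j)=(-1)^{j-1}(2\pi)^{2j}B_{2j}/\bigl(2\,(2j)!\bigr)$ from \eqref{zeta(2m)} kills all powers of $\pi$ and collapses the signs, and one finds that the $d^{2m}$-coefficient of $-\rho_m$ equals $(-1)^{m-1}2^{2m}B_{2m}/\bigl(m^2\binom{2m}{m}\bigr)$ (it comes from $b_0$, the square of the leading term of $\cot^{(m-1)}$), the $d^{2k}$-coefficient equals $2^{2m}\,\frac{B_{2k}B_{2(m-k)}}{m(m-k)}\binom{m}{2k}$ (it comes from $b_{m-k}$, i.e.\ leading $\times$ higher coefficient), and the constant term equals $2^{2m}\bigl(B_{2m}/m^2-\frac{1+(-1)^m}{2}\,B_m^2/m^2\bigr)$, the $B_m^2$ contribution being the square of the Laurent coefficient of index $j=m/2$ and hence present precisely when $m$ is even. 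This is $R_m(d)$; for $m=1$ the same expansion gives $\rho_1(X)=-(X^2+2)/3$, whence $-X-\rho_1(X)=(X^2-3X+2)/3=R_1(X)$.

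Combining the two steps gives $S_m(d)=R_m(d)$ for $d\ge2$, together with $R_m(1)=-\rho_m(1)=0$ (and $R_1(1)=-1-\rho_1(1)=0$). The main obstacle is this last identification of $-\rho_m$ with $R_m$: one has to keep straight how the cutoff $\lceil m/2\rceil$ in the expansion of $\cot^{(m-1)}(\pi z)$, the selection of the $z^{-1}$-coefficient, and the three sources of terms in $\rho_m$ — the $d^{2m}$-term from squaring the leading coefficient, the $d^{2k}$-terms and part of the constant from leading $\times$ higher coefficients, and the remainder of the constant from the single higher $\times$ higher cross term that survives only for even $m$ — fit together, and then push the resulting factorials, together with the signs $(-1)^{m-1}$, $(-1)^{l}$ and $(-1)^{m-l-1}$, through to a Bernoulli-number identity matching $R_m$ monomial by monomial.
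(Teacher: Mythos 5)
Your proposal is correct and follows essentially the same route as the paper: you integrate the same kernel $d\cot(dz)\,(\cot^{(m-1)}(z))^2$ (merely rescaled by $z\mapsto\pi z$ and with $\zeta(2j)$ in place of the Bernoulli coefficients) around a period rectangle, use periodicity to cancel the vertical sides, treat the horizontal limits separately for $m=1$ and $m\ge 2$, identify $S_m(d)$ with the poles at $k\pi/d$ and $-R_m(d)$ with the Laurent residue at $0$, and get $R_m(1)=0$ from the $d=1$ case — exactly the paper's argument. The coefficient bookkeeping you outline (leading$^2$ giving the $d^{2m}$ term, leading$\times$higher giving the $d^{2k}$ terms, and the higher$\times$higher cross term surviving only for even $m$ and producing the $B_m^2$ correction) checks out against $R_m$.
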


\begin{proof}
For $d\geq 1$, set 
$$g_m(d,z) 
=d\cot (dz)\left (\cot^{(m-1)}(z)\right )^2
=\frac{1}{z^{2m+1}}(dz\cot (dz))(z^{m}\cot^{(m-1)}(z))^{2},$$
a $\pi$-periodic meromorphic function of the complex variable $z$, 
with a simple pole at each $k\pi/d$, $k\in {\mathbb Z}\setminus d{\mathbb Z}$, 
of residue 
\begin{equation}\label{residue}
{\rm Res}_{z=k\pi /d}(g_m(d,z)
=\left (\cot^{(m-1)}(k\pi /d)\right )^2
\ \ \ \ \ (k\in {\mathbb Z}\setminus d{\mathbb Z})
\end{equation}
and poles of order $2m+1$ at each $k\pi$, $k\in {\mathbb Z}$. 
For $R>0$, let $C_R$ be the positively oriented rectangle 
of width $\pi$, 
height $2R$ 
and center of gravity at $\pi/2-\pi/(2d)$.\\ 
(i). We claim that 
\begin{equation}\label{valueintCR}
\lim_{R\rightarrow +\infty}\frac{1}{2\pi i}\int_{C_R}g_m(d,z){\rm d}z
=\begin{cases}
-d&\hbox{if $m=1$ and $d\geq 1$,}\\
0&\hbox{if $m\geq 2$ and $d\geq 1$.}
\end{cases}
\end{equation}
Indeed, since $g$ is $\pi$-periodic the integrals over the vertical sides of $C_R$ cancel. 
Moreover, since $\vert\cot(t+ib)+i{\rm sign}(b)\vert\leq 2/(e^{\vert b\vert}-1)$ 
we have $\lim_{b\rightarrow+\infty} \cot (a+ib) =-i$ 
and $\lim_{R\rightarrow+\infty} \cot(a-ib) =i$ 
uniformly in $a\in I$, where $I$ is a given closed interval. 
Hence for $m=1$ we have
$\lim_{R\rightarrow+\infty} g_1(d,t+iR) =di$ 
and $\lim_{R\rightarrow+\infty} g_1(d,t-iR) =-di$, 
uniformly in $t\in [-\pi/(2d),\pi -\pi/(2d)]$. 
The first case of claim \eqref{valueintCR} follows. 
Now assume that $m\geq 2$. 
Then $\cot^{(m-1)} =Q_m(\cot)$ 
where the polynomials $Q_k(X)$'s are defined inductively by 
$Q_1(X)=X$ and $Q_{k+1}(X)=-(X^2+1)Q_k'(X)$.
Consequently, we have $\lim_{R\rightarrow+\infty} g_m(d,t+iR) =Q_m(-i)=0$ 
and $\lim_{R\rightarrow+\infty} g_m(d,t-iR) =Q_m(i)=0$, 
uniformly in $t\in [-\pi/(2d),\pi -\pi/(2d)]$. 
Hence the integrals over the horizontal sides tend to $0$ as $R$ goes to infinity for $m\geq 2$ 
and the second case of claim \eqref{valueintCR} follows.\\
(ii). Now, we claim that 
\begin{equation}\label{Rm(d)}
R_m(d)
=\lim_{R\rightarrow +\infty}\frac{1}{2\pi i}\int_{C_R}g_m(d,z){\rm d}z
-{\rm Res}_{z=0}(g_m(d,z))
\ \ \ \ \ \hbox{($m\geq 1$ and $d\geq 1$).}
\end{equation}
Indeed, if $m=1$ then 
$g_1(d,z) =d\cot(dz)\cot^2z$, 
$${\rm Res}_{z=0}(g_1(d,z)=-(d^2+2)/3$$ 
and the claims follow by \eqref{valueintCR} and the definition of $R_1(X)$.
Now assume that $m\geq2$. 
Using the power series expansion 
\begin{equation}\label{seriescotbis}
z\cot (z)
=\sum_{k\geq 0}(-1)^k\frac{2^{2k}B_{2k}}{(2k)!}z^{2k}
\ \ \ \ \ (\vert z\vert<\pi),
\end{equation} 
we have 
\begin{equation}\label{seriesderivativescot}
z^m\cot^{(m-1)}(z)
=(-1)^{m-1}(m-1)!
+\sum_{k\geq m/2}(-1)^k\frac{2^{2k}B_{2k}}{2k\cdot (2k-m)!}z^{2k}
\ \ \ \ \ (\vert z\vert<\pi).
\end{equation} 
Using \eqref{seriescotbis} and \eqref{seriesderivativescot}
to compute the coefficient of the term $z^{2m}$ in the power series expansion of the function
$(dz\cot (dz))(z^{m}\cot^{(m-1)}(z))^{2}$, 
we readily obtain that 
$${\rm Res}_{z=0}(g_m(d,z))
=-R_m(d)
\text{ for $m\geq 2$ and $d\geq 1$.}$$ 
Hence claim \eqref{Rm(d)} follows from \eqref{valueintCR}.\\
(iii). Finally, we use the residue theorem. 
For $d=1$, noticing that $z=0$ is the only pole of $g_m(1,z)$ inside $C_R$ and using \eqref{Rm(d)} 
we obtain $R_m(1)=0$ for $m\geq 1$.
For $d\geq 2$, noticing that $z=0$ and $z=k\pi/d$, $1\leq k\leq d-1$, 
are the only poles of $g_m(d,z)$ inside $C_R$ and using \eqref{Rm(d)} and \eqref{residue} 
we obtain $R_m(d)=S_m(d)$ for $d\geq 2$ and $m\geq 1$, as desired.
\end{proof}

In \cite[Corollary 6.8]{EL} the authors gave a reformulation of \cite[Corolloray 2.2]{BY}. 
It would be interesting to see if their method can also be applied to a reformulation of our Lemma \ref{TmSm}.

\section{A general explicit formula}
For $f>2$, $n\geq 2$ and $m_1,\cdots,m_n\geq 1$ such that $m_1+\cdots+m_{n}=2s$ is even, 
we set $\overrightarrow{m} =(m_1,\cdots,m_n)$ and
\begin{multline*}
M(\overrightarrow{m},f)
=\left (\frac{2}{\phi(f)}\right )^{n-1}\\
\times\sum_{\chi_1(-1)=(-1)^{m_1}}\cdots\sum_{\chi_{n-1}(-1)=(-1)^{m_{n-1}}}
L(m_1,\chi_1)\cdots L(m_{n-1},\chi_{n-1})\overline{L(m_n,\chi_1\cdots\chi_{n-1})}.
\end{multline*}
Hence, $(\chi_1\cdots\chi_{n-1})(-1) =(-1)^{m_n}$.
Using the formula 
$$L(m,\chi)
=\frac{(-1)^{m-1}\pi^m}{2\cdot f^m\cdot (m-1)!}
\sum_{k=1}^{f-1}\chi(k)\cot^{(m-1)}(\pi k/f)
\ \ \ \ \ \hbox{(whenever $\chi(-1)=(-1)^m$).}$$ 
proved in \cite[(1)]{LouCollMath90}, 
we have
\begin{multline*}
M(\overrightarrow{m},f)
=\frac{(-1)^{n}\pi^{2s}}
{2^nf^{2s}\prod_{l=1}^n (m_l-1)!}\\
\times
\sum_{k_n=1}^{f-1}\sum_{k_1=1}^{f-1}\cdots\sum_{k_{n-1}=1}^{f-1}
\left (\prod_{l=1}^{n}\cot^{(m_l-1)}(\pi k_l/f)\right )
\left (\prod_{l=1}^{n-1}\frac{2}{\phi(f)}\sum_{\chi_l(-1)=(-1)^{m_l}}\chi_l(k_l)\overline{\chi_l(k_n)}\right ).
\end{multline*}
We have the following orthogonality relations for the $\phi(f)/2$ characters $\chi_l$'s modulo $f>2$ 
for which $\chi_l(-1)=(-1)^{m_l}$:
$$\frac{2}{\phi(f)}\sum_{\chi_l(-1)=(-1)^{m_l}}\chi_l(k_l)\overline{\chi_l(k_n)}
=\begin{cases}
1&\hbox{if $k_l\equiv k_n\pmod f$ and $\gcd(k_n,f)=1$},\\
(-1)^{m_l}&\hbox{if $k_l\equiv -k_n\pmod f$ and $\gcd(k_n,f)=1$,}\\
0&\hbox{otherwise.}
\end{cases}$$
Noticing that $\chi_l(-k_n)\cot^{(m_l-1)}(-\pi k_n/f) =\chi_l(k_n)\cot^{(m_l-1)}(\pi k_n/f)$ 
whenever $\chi_l(-1)=(-1)^{m_l}$, 
we obtain 
$$M(\overrightarrow{m},f)
=\frac{(-1)^{n}\pi^{2s}}
{2f^{2s}\prod_{l=1}^n (m_l-1)!}
\sum_{\substack{k_n=1\\\gcd(k_n,f)=1}}^{f-1}
\prod_{l=1}^n
\cot^{(m_l-1)}(\pi k_n/f).$$
The following generalization of Lemma \ref{M(m,f)} follows:

\begin{lemma}\label{M(m,f)generalization}
For $f>2$, $n\geq 2$ and $m_1,\cdots,m_n\geq 1$ such that $m_1+\cdots+m_{n}=2s$ is even 
we have
$$M(\overrightarrow{m},f)
=\frac{(-1)^{n}\pi^{2s}}
{2\cdot (m_1-1)!\cdots (m_n-1)!\cdot f^{2s}}
\sum_{\substack{d\mid f\\ d>1}}\mu(f/d)S_{\overrightarrow{m}}(d),$$
where 
$$S_{\overrightarrow{m}}(d)
=\sum_{k=1}^{d-1}\prod_{l=1}^n
\cot^{(m_l-1)}\left (\frac{k\pi}{d}\right )
\ \ \ \ \ (d>1).$$
\end{lemma}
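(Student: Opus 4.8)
The plan is to obtain Lemma \ref{M(m,f)generalization} from the computation already displayed above it, in exactly the way Lemma \ref{M(m,f)} is obtained in \cite{LouCollMath90}: one simply carries the extra characters $\chi_2,\dots,\chi_{n-1}$ through the same argument. First I would insert the closed formula for $L(m,\chi)$ (valid when $\chi(-1)=(-1)^m$) into each of the $n$ factors in the definition of $M(\overrightarrow{m},f)$, using for the conjugated factor that each $\cot^{(m_n-1)}(\pi k/f)$ is real and that $\overline{(\chi_1\cdots\chi_{n-1})(k_n)}=\prod_{l<n}\overline{\chi_l(k_n)}$. Regrouping the $n-1$ character sums as in the excerpt turns $M(\overrightarrow{m},f)$ into $\frac{(-1)^n\pi^{2s}}{2^nf^{2s}\prod_{l}(m_l-1)!}$ times
\[
\sum_{k_1=1}^{f-1}\cdots\sum_{k_n=1}^{f-1}\Bigl(\prod_{l=1}^{n}\cot^{(m_l-1)}(\pi k_l/f)\Bigr)\prod_{l=1}^{n-1}\frac{2}{\phi(f)}\sum_{\chi_l(-1)=(-1)^{m_l}}\chi_l(k_l)\overline{\chi_l(k_n)},
\]
the sign $(-1)^n$ arising because $\sum_{l}(m_l-1)=2s-n$ is congruent to $n$ modulo $2$.

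Next I would apply the parity-restricted orthogonality relations recorded above, which follow from the standard relation $\sum_{\chi\bmod f}\chi(a)\overline{\chi(b)}=\phi(f)[a\equiv b\bmod f]$ (for $\gcd(b,f)=1$) and its twist by $\chi(-1)$ by inserting the projector $\frac12\bigl(1+(-1)^{m_l}\chi_l(-1)\bigr)$ onto characters of parity $(-1)^{m_l}$. When $\gcd(k_n,f)=1$ the inner sum over $k_l\in\{1,\dots,f-1\}$ keeps only $k_l=k_n$ (with weight $1$) and $k_l=f-k_n$ (with weight $(-1)^{m_l}$), and it vanishes identically otherwise. Since $\cot$ is $\pi$-periodic and odd, $\cot^{(m_l-1)}(\pi(f-k_n)/f)=\cot^{(m_l-1)}\!\bigl(\pi-\pi k_n/f\bigr)=(-1)^{m_l}\cot^{(m_l-1)}(\pi k_n/f)$, so each of the $n-1$ brackets contributes $2\cot^{(m_l-1)}(\pi k_n/f)$; the factor $2^{n-1}$ thus produced cancels $2^{n-1}$ from the denominator and yields precisely the identity for $M(\overrightarrow{m},f)$ displayed immediately before the statement, namely $\frac{(-1)^n\pi^{2s}}{2f^{2s}\prod_l(m_l-1)!}$ times $\sum_{1\le k\le f-1,\,\gcd(k,f)=1}\prod_{l}\cot^{(m_l-1)}(\pi k/f)$.

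Finally I would strip the coprimality condition by Möbius inversion exactly as in Lemma \ref{M(m,f)}: writing $[\gcd(k,f)=1]=\sum_{e\mid\gcd(k,f)}\mu(e)$ and setting $d=f/e$ converts $\sum_{1\le k\le f-1,\,\gcd(k,f)=1}g(k/f)$ into $\sum_{d\mid f,\,d>1}\mu(f/d)\sum_{j=1}^{d-1}g(j/d)$ for any function $g$ on $\{1/f,\dots,(f-1)/f\}$; taking $g(x)=\prod_{l}\cot^{(m_l-1)}(\pi x)$ gives the asserted formula with $S_{\overrightarrow{m}}(d)$ as defined. Here the $d=1$ term disappears because its inner sum is empty, and no pole of a $\cot^{(m_l-1)}$ is ever evaluated because the excluded value $j=d$ (i.e. $k=f$) is never coprime to $f$ for $f>2$.

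I do not expect any genuine obstacle: the whole argument is a verbatim generalization of the $n=1$ case, and essentially all of it is already written out in the paragraphs preceding the statement, so the proof is little more than a reference to that computation followed by the Möbius step. The only places demanding mild care are the bookkeeping of the sign $(-1)^n$ and of the factor $2^{n-1}$ (both hinging on $\sum_l m_l=2s$ being even and on the parity identity for $\cot^{(m_l-1)}$) and the harmless degeneracy of the $d=1$ term in the Möbius inversion.
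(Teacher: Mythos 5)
Your proposal is correct and follows essentially the same route as the paper: insert the cotangent formula for $L(m,\chi)$ from \cite[(1)]{LouCollMath90}, apply the parity-restricted orthogonality relations together with the identity $\cot^{(m_l-1)}(\pi-x)=(-1)^{m_l}\cot^{(m_l-1)}(x)$ to collapse the sums to the $k_n$ coprime to $f$, and finish with the same M\"obius inversion used for Lemma \ref{M(m,f)}. The sign and power-of-two bookkeeping in your write-up matches the paper's displayed intermediate formula, so there is nothing to add.
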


Now, for $d\geq 1$ a rational integer, we set 
$$g_{\overrightarrow{m}}(d,z) 
=d\cot (dz)\prod_{l=1}^n\cot^{(m_l-1)}(z)
=\frac{(dz\cot (dz))\prod_{l=1}^nz^{m_l}\cot^{(m_l-1)}(z)}{z^{2s+1}},$$
a $\pi$-periodic meromorphic function of the complex variable $z$, 
with a simple pole at each $k\pi/d$, $k\in {\mathbb Z}\setminus d{\mathbb Z}$, 
of residue 
\begin{equation}\label{residuegeneralization}
{\rm Res}_{z=k\pi /d}(g_{\overrightarrow{m}}(d,z)
=\prod_{l=1}^n\cot^{(m_l-1)}(k\pi /d)
\ \ \ \ \ (k\in {\mathbb Z}\setminus d{\mathbb Z})
\end{equation}
and poles of order $2s+1$ at each $k\pi$, $k\in {\mathbb Z}$. 
Let 
$T_{\overrightarrow{m}}(X)$ denote 
the coefficient of $z^{m_1+\cdots+m_n}$
in the formal power series expansion
$$\left (\sum_{k\geq 0}(-1)^k\frac{2^{2k}B_{2k}}{(2k)!}X^{2k}z^{2k}\right)
\prod_{l=1}^{n}
\left ((-1)^{m_l-1}(m_l-1)!
+\sum_{k\geq m_l/2}(-1)^k\frac{2^{2k}B_{2k}}{2k\cdot (2k-m_l)!}z^{2k}\right ).$$
Hence, $T_{\overrightarrow{m}}(X)$ is a polynomial of degree $2s=m_1+\cdots+m_n$, 
with rational coefficients and leading coefficient 
$$(-1)^{s-n}
\frac{2^{2s}B_{2s}}{(2s)!}
\prod_{l=1}^n(m_l-1)!$$
and
\begin{equation}\label{ResidueTm}
{\rm Res}_{z=0}(g_{\overrightarrow{m}}(d,z))
=T_{\overrightarrow{m}}(d)\hbox{ for $d\geq 1$}.
\end{equation}
With the same proof, we have the following generalization of \eqref{valueintCR}:
\begin{equation}\label{valueintCRgeneralization}
\lim_{R\rightarrow +\infty}\frac{1}{2\pi i}\int_{C_R}g_{\overrightarrow{m}}(d,z){\rm d}z 
=\begin{cases}
(-1)^{n/2}d&\hbox{(if $m_1=\cdots=m_n=1$),}\\ 
0& \hbox{otherwise.}
\end{cases}
\end{equation}
($n$ must be even if $m_1=\cdots=m_n=1$).\\
We have the following generalization of Lemma \ref{TmSm}:

\begin{lemma}\label{TmSmgeneralization}
Set 
$$R_{\overrightarrow{m}}(X)
=-T_{\overrightarrow{m}}(X)
+\begin{cases}
(-1)^{n/2}X&\hbox{(if $m_1=\cdots=m_n=1$),}\\ 
0& \hbox{otherwise.}
\end{cases}$$
Then 
$R_{\overrightarrow{m}}(1)=0$ 
and $S_{\overrightarrow{m}}(d)=R_{\overrightarrow{m}}(d)$ for $d\geq 2$.
\end{lemma}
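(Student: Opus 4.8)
The plan is to mimic the proof of Lemma \ref{TmSm} step by step, since Lemma \ref{TmSmgeneralization} is its direct generalization and all the necessary ingredients have already been set up. First I would invoke the residue theorem for the meromorphic $\pi$-periodic function $g_{\overrightarrow{m}}(d,z)$ on the rectangle $C_R$ of width $\pi$, height $2R$ and center of gravity at $\pi/2-\pi/(2d)$, exactly as in the proof of Lemma \ref{TmSm}. Combining this with the limiting value \eqref{valueintCRgeneralization} of the contour integral and the residue formula \eqref{ResidueTm} at $z=0$, I obtain for every $d\ge 1$ the identity
$$
\sum_{\text{poles $z_0\ne 0$ of $g_{\overrightarrow{m}}(d,\cdot)$ in $C_R$}}{\rm Res}_{z=z_0}(g_{\overrightarrow{m}}(d,z))
= \lim_{R\to\infty}\frac{1}{2\pi i}\int_{C_R}g_{\overrightarrow{m}}(d,z)\,{\rm d}z - T_{\overrightarrow{m}}(d),
$$
which, after substituting \eqref{valueintCRgeneralization}, is precisely $R_{\overrightarrow{m}}(d)$ as defined in the statement.

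Next I would evaluate the two cases $d=1$ and $d\ge 2$ separately. For $d=1$, the function $g_{\overrightarrow{m}}(1,z)=\cot(z)\prod_{l=1}^n\cot^{(m_l-1)}(z)$ has its only pole inside $C_R$ at $z=0$ (the rectangle is centered at $\pi/2-\pi/2=0$ and has width $\pi$, so it contains no other multiple of $\pi$), hence the left-hand sum above is empty and we get $R_{\overrightarrow{m}}(1)=0$. For $d\ge 2$, the poles of $g_{\overrightarrow{m}}(d,z)$ inside $C_R$ are $z=0$ and the simple poles $z=k\pi/d$ with $1\le k\le d-1$; by the residue computation \eqref{residuegeneralization} the sum of the nonzero residues equals $\sum_{k=1}^{d-1}\prod_{l=1}^n\cot^{(m_l-1)}(k\pi/d)=S_{\overrightarrow{m}}(d)$, so $R_{\overrightarrow{m}}(d)=S_{\overrightarrow{m}}(d)$ for $d\ge 2$.

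There is one point that needs a brief justification rather than a pure appeal to the scalar case: one must check that $T_{\overrightarrow{m}}(X)$, defined as the coefficient of $z^{2s}$ in the displayed product of power series, genuinely gives ${\rm Res}_{z=0}(g_{\overrightarrow{m}}(d,z))$. This is immediate from the factorization
$$
g_{\overrightarrow{m}}(d,z)=\frac{(dz\cot(dz))\prod_{l=1}^n z^{m_l}\cot^{(m_l-1)}(z)}{z^{2s+1}},
$$
together with the power series \eqref{seriescotbis} for $z\cot z$ applied with $z$ replaced by $dz$ (which produces the factor $\sum_{k\ge 0}(-1)^k\tfrac{2^{2k}B_{2k}}{(2k)!}X^{2k}z^{2k}$ with $X=d$) and the expansion \eqref{seriesderivativescot} for each $z^{m_l}\cot^{(m_l-1)}(z)$: extracting the residue means extracting the coefficient of $z^{2s}$ in the numerator, which is exactly $T_{\overrightarrow{m}}(d)$. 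I would also recall, following the remark after \eqref{valueintCRgeneralization}, that $n$ is necessarily even in the all-ones case since $m_1+\cdots+m_n=2s$ forces $n$ even when every $m_l=1$, so the case distinction in the definition of $R_{\overrightarrow{m}}$ is meaningful.

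The main obstacle is essentially bookkeeping rather than conceptual: one must be confident that the convergence argument for the horizontal sides of $C_R$, which in Lemma \ref{TmSm} used $\cot^{(m-1)}=Q_m(\cot)$ and $\lim_{b\to\infty}\cot(a\pm ib)=\mp i$, goes through verbatim for the product $\prod_{l=1}^n\cot^{(m_l-1)}(z)$ — each factor tends to $Q_{m_l}(\mp i)$, which is $0$ unless $m_l=1$, so the product tends to $0$ unless all $m_l=1$, in which case it tends to $(\mp i)^n$ and combines with the $d\cot(dz)\to\mp di$ factor to give $(\mp i)^{n+1}\cdot(\mp d)$; tracking the sign through to $(-1)^{n/2}d$ is the one computation to do carefully. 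Since \eqref{valueintCRgeneralization} is already asserted with this proof ("With the same proof"), I may cite it directly, and the lemma then follows with no further work.
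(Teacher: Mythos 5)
Your proof is correct and follows essentially the same route as the paper: apply the residue theorem to $g_{\overrightarrow{m}}(d,z)$ on $C_R$, combine the limit \eqref{valueintCRgeneralization} with the residue at $z=0$ given by \eqref{ResidueTm}, and treat $d=1$ (only pole $z=0$, giving $R_{\overrightarrow{m}}(1)=0$) and $d\geq 2$ (poles $k\pi/d$ contributing $S_{\overrightarrow{m}}(d)$ via \eqref{residuegeneralization}) separately. Your extra verifications of \eqref{ResidueTm} and of the horizontal-side limits are sound and simply spell out what the paper asserts in the surrounding text.
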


\begin{proof}
By \eqref{ResidueTm} and \eqref{valueintCRgeneralization}, for $d\geq 1$ we have 
$$R_{\overrightarrow{m}}(d)
:=\frac{1}{2\pi i}\int_{C_R}g_{\overrightarrow{m}}(d,z){\rm d}z
-{\rm Res}_{z=0}(g_{\overrightarrow{m}}(d,z)).$$
We use the residue theorem.
For $d=1$, noticing that $z=0$ is the only pole of $g_{\overrightarrow{m}}(1,z) $ inside $C_R$, 
here again we get
\begin{equation}\label{R(m1mn(1)}
R_{\overrightarrow{m}}(1)=0.
\end{equation}
For $d\geq 2$, noticing that $z=0$ and $z=k\pi/d$, $1\leq k\leq d-1$, 
are the only poles of $g_{\overrightarrow{m}}(d,z)$ inside $C_R$ 
and using \eqref{residuegeneralization}, we obtain 
$R_{\overrightarrow{m}}d)
=S_{\overrightarrow{m}}d)$. 
\end{proof}

By Lemma \ref{TmSmgeneralization}, Lemma \ref{Rk} and \eqref{zeta(2m)}, 
and noticing that $T_{\overrightarrow{m}}(X)$ is even 
we end up with the following generalization of Theorem \ref{mainresult}:

\begin{theorem}\label{mainresultgeneralization}
Write
$$T_{\overrightarrow{m}}(X)
=(-1)^{s-n}
\frac{2^{2s}B_{2s}}{(2s)!}
\left (\prod_{l=1}^n(m_l-1)!\right )
\left (X^{2s}+\sum_{k=0}^{s-1}r_{\overrightarrow{m},2k}X^{2k}\right ),$$
Then, for $f>2$ we have
\begin{multline*}
M(\overrightarrow{m},f)
=\zeta(2s)
\times\left (\phi_{2s}(f)
+\sum_{k=1}^{s-1}r_{\overrightarrow{m},2k}
\frac{\phi_{2k}(f)}{f^{2s-2k}}
\right )\\
+\begin{cases}
(-1)^{n/2}\frac{\pi^n\phi_1(f)}{2f^{n-1}}&\hbox{(if $m_1=\cdots=m_n=1$),}\\ 
0& \hbox{otherwise.}
\end{cases}
\end{multline*}
In particular, as $f$ goes to infinity $M(\overrightarrow{m},f)$ is asymptotic to 
$$\zeta(2s)\phi_{2s}(f),
\hbox{ where }
2s=m_1+\cdots+m_n.$$ 
\end{theorem}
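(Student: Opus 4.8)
The plan is to assemble the ingredients already in place. Lemma~\ref{M(m,f)generalization} writes $M(\overrightarrow{m},f)$ as an explicit constant times the Möbius-weighted sum $\sum_{d\mid f,\,d>1}\mu(f/d)S_{\overrightarrow{m}}(d)$; Lemma~\ref{TmSmgeneralization} replaces $S_{\overrightarrow{m}}(d)$ by $R_{\overrightarrow{m}}(d)$ on the range $d\geq 2$ and guarantees $R_{\overrightarrow{m}}(1)=0$; and Lemma~\ref{Rk}, which applies precisely because $R_{\overrightarrow{m}}(1)=0$, evaluates the resulting sum in closed form through the numbers $\phi_l(f)$. What remains is to read off the coefficients of $R_{\overrightarrow{m}}$, substitute, and simplify the leading constant with the help of \eqref{zeta(2m)}.

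First I would record the coefficients of $R_{\overrightarrow{m}}$. Since $T_{\overrightarrow{m}}$ is even with leading coefficient $c:=(-1)^{s-n}\frac{2^{2s}B_{2s}}{(2s)!}\prod_{l=1}^n(m_l-1)!$, the normalization adopted in the statement gives
$$R_{\overrightarrow{m}}(X)=-c\,X^{2s}-c\sum_{k=0}^{s-1}r_{\overrightarrow{m},2k}X^{2k}+\begin{cases}(-1)^{n/2}X&\text{if }m_1=\cdots=m_n=1,\\ 0&\text{otherwise.}\end{cases}$$
Applying Lemma~\ref{Rk} to this polynomial — whose only odd-degree coefficient is the possible $X^1$ term, and whose degree-zero term contributes nothing since the sum in Lemma~\ref{Rk} starts at $l=1$ — I obtain
$$\sum_{\substack{d\mid f\\ d>1}}\mu(f/d)S_{\overrightarrow{m}}(d)=-c\Bigl(\phi_{2s}(f)f^{2s}+\sum_{k=1}^{s-1}r_{\overrightarrow{m},2k}\phi_{2k}(f)f^{2k}\Bigr)+\begin{cases}(-1)^{n/2}\phi_1(f)f&\text{if }m_1=\cdots=m_n=1,\\ 0&\text{otherwise.}\end{cases}$$

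Then I would feed this into Lemma~\ref{M(m,f)generalization}. Multiplying $-c$ by the prefactor $\frac{(-1)^n\pi^{2s}}{2\prod_{l=1}^n(m_l-1)!\,f^{2s}}$, the factorials cancel and, using $(-1)^n(-1)^{s-n}=(-1)^s$, the constant becomes
$$-(-1)^{s}\frac{(2\pi)^{2s}B_{2s}}{2\,(2s)!}\cdot\frac{1}{f^{2s}}=\frac{\zeta(2s)}{f^{2s}},$$
the last equality being \eqref{zeta(2m)} with $m=s$ (note $(-1)^{s-1}=-(-1)^s$). Multiplying through by $\phi_{2s}(f)f^{2s}+\sum_{k=1}^{s-1}r_{\overrightarrow{m},2k}\phi_{2k}(f)f^{2k}$ yields the main term $\zeta(2s)\bigl(\phi_{2s}(f)+\sum_{k=1}^{s-1}r_{\overrightarrow{m},2k}\phi_{2k}(f)/f^{2s-2k}\bigr)$. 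In the exceptional case $m_1=\cdots=m_n=1$ we have $\prod_l(m_l-1)!=1$ and $2s=n$ is even, so the prefactor times $(-1)^{n/2}\phi_1(f)f$ equals $(-1)^n(-1)^{n/2}\pi^n\phi_1(f)/(2f^{n-1})=(-1)^{n/2}\pi^n\phi_1(f)/(2f^{n-1})$, which is exactly the extra term in the statement. For the asymptotics, $1/\zeta(2s)=\prod_p(1-p^{-2s})\leq\phi_{2s}(f)\leq 1$, so dividing $M(\overrightarrow{m},f)$ by $\zeta(2s)\phi_{2s}(f)$ leaves $1$ plus terms that are $O(f^{-2})$ (the remaining $\phi_{2k}$-terms) and, when present, $O(f^{-(n-1)})$ with $n\geq 2$; hence the ratio tends to $1$.

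Since the preceding lemmas already carry all the analytic content — the contour-integral estimate \eqref{valueintCRgeneralization} and the residue computation \eqref{ResidueTm} — I do not expect a genuine obstacle here. The only point that needs care is the sign and parity bookkeeping in simplifying the prefactor: namely that $(-1)^n(-1)^{s-n}=(-1)^s$, that $(-1)^{s-1}=-(-1)^s$ when invoking \eqref{zeta(2m)}, and that $n$ is forced to be even exactly in the case $m_1=\cdots=m_n=1$.
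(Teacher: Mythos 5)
Your proposal is correct and follows the same route as the paper, which obtains the theorem precisely by combining Lemma~\ref{M(m,f)generalization}, Lemma~\ref{TmSmgeneralization}, Lemma~\ref{Rk} and \eqref{zeta(2m)}, using the evenness of $T_{\overrightarrow{m}}$; your sign and parity bookkeeping (including the cancellation of the factorials and the treatment of the $m_1=\cdots=m_n=1$ case) is accurate. You have simply written out explicitly the assembly that the paper leaves to the reader.
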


\section{Several explicit examples}
\subsection{First example.} 
By Lemma \ref{TmSmgeneralization} we have 
$S_{\overrightarrow{(1,1,2)}}(d)
=R_{\overrightarrow{(1,1,2)}}(d)
=-T_{\overrightarrow{(1,1,2)}}(d)$. 
By \eqref{ResidueTm}, we obtain
$$S_{\overrightarrow{(1,1,2)}}(d)
=-T_{\overrightarrow{(1,1,2)}}(d)
=-{\rm Res}_{z=0}(d\cot(dz)\cot^2(z)\cot'(z))
=-\frac{1}{45}d^4+\frac{1}{9}d^2-\frac{4}{45}.$$
Therefore 
$$M(\overrightarrow{(1,1,2)},f)
=-\frac{\pi^4}{2f^4}\sum_{d\mid f}\mu (f/d)S_{\overrightarrow{(1,1,2)}}(d)
=\frac{\pi^4}{90}\left (\phi_4(f)-5\frac{\phi_2(f)}{f^2}\right ),$$
by Lemma \ref{M(m,f)generalization} and Lemma \ref{Rk}.
We have recovered \cite[Theorem 1]{Alkan}.

\subsection{Second example.} 
\begin{theorem}
Let $n\geq 2$ be even. 
For $f>2$ set 
$$M_n(f)
=\left (\frac{2}{\phi(f)}\right )^{n-1}
\sum_{\chi_1(-1)=-1}\cdots\sum_{\chi_{n-1}(-1)=-1}
L(1,\chi_1)\cdots L(1,\chi_{n-1})\overline{L(1,\chi_1\cdots\chi_{n-1})}.$$
Then 
$$M_n(f)
=\zeta(n)\times\left (
\phi_n(f)
+\frac{n!}{B_n}\sum_{k_0=1}^{n/2-1}
C_{n,n/2-k_0}
\frac{B_{2k_0}}{(2k_0)!}
\frac{\phi_{2k_0}(f)}{f^{n-2k_0}}
\right )
+(-1)^{n/2}\frac{\pi^n\phi_1(f)}{2f^{n-1}},$$
where 
$$C_{n,N}
=\sum_{\substack{e_1,\cdots,e_N\geq 0\\ e_1+2e_2\cdots+Ne_N=N}}
\frac{n!}{(n-(e_1+\cdots+e_N))!}
\prod_{l=1}^N\frac{1}{e_l!}\left (\frac{B_{2l}}{(2l)!}\right )^{e_l}
\ \ \ \ \ (n\geq N\geq 1).$$
\end{theorem}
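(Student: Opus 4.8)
The plan is to specialize the general Theorem~\ref{mainresultgeneralization} to the case $m_1=\cdots=m_n=1$ with $n$ even, so that $2s=n$, and to make the coefficients $r_{\overrightarrow{m},2k}$ explicit by extracting the relevant Taylor coefficient of the generating function defining $T_{\overrightarrow{m}}(X)$. First I would write down the generating function for $T_{\overrightarrow{(1,\dots,1)}}(X)$: since each factor $z^{m_l}\cot^{(m_l-1)}(z)$ with $m_l=1$ is just $z\cot z=\sum_{k\ge 0}(-1)^k\frac{2^{2k}B_{2k}}{(2k)!}z^{2k}$, the defining series becomes
$$
\left(\sum_{k\ge 0}(-1)^k\frac{2^{2k}B_{2k}}{(2k)!}X^{2k}z^{2k}\right)\left(\sum_{k\ge 0}(-1)^k\frac{2^{2k}B_{2k}}{(2k)!}z^{2k}\right)^{n},
$$
and $T_{\overrightarrow{(1,\dots,1)}}(X)$ is the coefficient of $z^{n}$ in it. So the whole problem reduces to expanding the $n$-th power of $z\cot z$ as a power series and multiplying by the single extra copy carrying the variable $X$.

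Next I would set $u=z^2$ and write $z\cot z=\sum_{l\ge 0}a_l u^l$ with $a_l=(-1)^l\frac{2^{2l}B_{2l}}{(2l)!}$ (so $a_0=1$). Then $(z\cot z)^n=\sum_{N\ge 0}b_{n,N}u^N$, where by the multinomial expansion
$$
b_{n,N}=\sum_{\substack{e_0,e_1,\dots,e_N\ge 0\\ e_0+e_1+\cdots+e_N=n\\ e_1+2e_2+\cdots+Ne_N=N}}\binom{n}{e_0,e_1,\dots,e_N}\prod_{l\ge 1}a_l^{e_l}.
$$
Eliminating $e_0=n-(e_1+\cdots+e_N)$ and substituting the explicit $a_l$, the sign $(-1)^{e_1+2e_2+\cdots}=(-1)^N$ factors out uniformly, and $\binom{n}{e_0,\dots,e_N}=\frac{n!}{(n-(e_1+\cdots+e_N))!}\prod_{l}\frac{1}{e_l!}$, so $b_{n,N}=(-1)^N 2^{2N}C_{n,N}$ with $C_{n,N}$ exactly the constant defined in the statement. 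The coefficient of $z^n$ in $(X^2$-series$)\cdot(z\cot z)^n$ is then $\sum_{k_0+N=n/2}a_{k_0}X^{2k_0}\cdot b_{n,N}$; comparing with the normalization
$$
T_{\overrightarrow{(1,\dots,1)}}(X)=(-1)^{n/2-n}\frac{2^{n}B_{n}}{n!}\left(X^{n}+\sum_{k_0=0}^{n/2-1}r_{2k_0}X^{2k_0}\right)
$$
(using $\prod(m_l-1)!=1$ and $s=n/2$) and reading off the coefficient of $X^{2k_0}$ for $k_0<n/2$, the factors of $2$, the overall sign $(-1)^{n/2}$, and the $B_n/n!$ all cancel down, and one is left with
$$
r_{2k_0}=\frac{n!}{B_n}\,C_{n,n/2-k_0}\,\frac{B_{2k_0}}{(2k_0)!}.
$$
Plugging this into Theorem~\ref{mainresultgeneralization} (whose correction term is precisely $(-1)^{n/2}\frac{\pi^n\phi_1(f)}{2f^{n-1}}$ in the all-ones case) gives the asserted formula for $M_n(f)$.

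The main obstacle is purely bookkeeping: tracking the powers of $2$ and the sign conventions so that $b_{n,N}$ matches $C_{n,N}$ up to exactly the factor $(-1)^N2^{2N}$, and then verifying that these cancel against the normalization constant of $T_{\overrightarrow{m}}(X)$ recorded in the statement of Theorem~\ref{mainresultgeneralization} (leading coefficient $(-1)^{s-n}\frac{2^{2s}B_{2s}}{(2s)!}\prod(m_l-1)!$, here $(-1)^{-n/2}\frac{2^{n}B_{n}}{n!}$). There is a small subtlety in the index shift $N=n/2-k_0$ and in the constraint $n\ge N\ge 1$ which forces $1\le k_0\le n/2-1$, matching the summation range in the theorem; I would check the $n=2$ case (where the sum over $k_0$ is empty) and the $n=4$ case against the first example $M(\overrightarrow{(1,1,2)},f)$ replaced by the genuinely all-ones computation as a sanity check. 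Everything else is a direct substitution into results already proved in the excerpt.
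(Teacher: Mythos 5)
Your proposal is correct and follows essentially the same route as the paper: specialize the generating series for $T_{\overrightarrow{m}}(X)$ to $m_1=\cdots=m_n=1$ (so it becomes the $X$-carrying $z\cot z$ factor times $(z\cot z)^n$), extract the coefficient of $z^n$, and convert the sum over $k_1+\cdots+k_n=N$ into a sum over multiplicities $e_l$ — your multinomial expansion is exactly the counting identity the paper proves — after which the signs and powers of $2$ cancel against the normalization in Theorem \ref{mainresultgeneralization} to give $r_{2k_0}=\frac{n!}{B_n}C_{n,n/2-k_0}\frac{B_{2k_0}}{(2k_0)!}$.
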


\begin{proof}
Here 
$$T_{\overrightarrow{m}}(d)
=(-1)^{n/2}\frac{2^{n}B_n}{n!}\sum_{\substack{k_0,k_1,\cdots,k_n\geq 0\\ k_0+k_1+\cdots+k_n=n/2}}
\frac{n!}{B_n}\left (\prod_{l=1}^n\frac{B_{2k_l}}{(2k_l)!}\right )\frac{B_{2k_0}}{(2k_0)!}d^{2k_0}.$$
Now, for $n\geq N\geq 1$ and $F:{\mathbb Z}_{\geq 0}\rightarrow {\mathbb C}^*$ such that $F(0)=1$, 
e.g. for $F(k) =B_{2k}/(2k)!$, 
we have 
$$\sum_{\substack{k_1,\cdots,k_n\geq 0\\ k_1+\cdots+k_n=N}}\prod_{l=1}^nF(k_l)
=\sum_{\substack{e_1,\cdots,e_N\geq 0\\ e_1+2e_2\cdots+Ne_N=N}}
\frac{n!}{(n-(e_1+\cdots+e_N))!}\prod_{l=1}^N\frac{F(l)^{e_l}}{e_l!},$$
by letting $e_l$ denote the number of $k_i$'s which are equal to $l$ 
and by noticing that we have ${n\choose e_0}$ possible choices of the $k_i$'s which are equal to $0$, 
then ${n-e_0\choose e_1}$ possible choices of the $k_i$'s which are equal to $1$,..., 
and finally by using $e_0+e_1+\cdots e_N =n$ and
$${n\choose e_0}{n-e_0\choose e_1}\cdots{n-(e_0+e_1+\cdots+e_{N-1})\choose e_N}
=\frac{n!}{e_0!\times e_1!\times\cdots\times e_N!}.$$
\end{proof}

For example, 
$$M_2(f)
=\frac{\pi^2}{6}\left (\phi_2(f)-3\frac{\phi_1(f)}{f}\right ),$$ 
in accordance with \eqref{M(1,f)},
$$M_4(f)
=\frac{\pi^4}{90}
\left (\phi_4(f)-20\frac{\phi_2(f)}{f^2}+45\frac{\phi_1(f)}{f^3}\right )$$
and
$$M_6(f)
=\frac{\pi^4}{945}
\left (
\phi_6(f)-21\frac{\phi_4(f)}{f^2}+\frac{483}{2}\frac{\phi_2(f)}{f^4}-\frac{945}{2}\frac{\phi_1(f)}{f^5}
\right )$$

For $f=3$, there is only one odd character $\chi$ modulo $3$, 
and $L(1,\chi) =\frac{\pi}{3\sqrt 3}$. 
Hence, $M_n(3) =\frac{\pi^n}{3^{3n/2}}$. 
Our formulae do give 
$M_2(3) =\frac{\pi^2}{3^3}$, 
$M_4(3) =\frac{\pi^4}{3^6}$ 
and $M_6(3) =\frac{\pi^6}{3^9}$.

\subsection{Third example.} 
We have the following generalization of Theorem \ref{mainresult}:

\begin{theorem}\label{M(m,n;f)}
For $m\geq 1$, $n\geq 1$ of the same parity and $(m,n)\neq (1,1)$, we have
\begin{align*}
M(\overrightarrow{(m,n)},f)
=&\frac{2}{\phi(f)}\sum_{\chi(-1)=(-1)^m=(-1)^n}L(m,\chi)\overline{L(n,\chi)}\\
=&\zeta(m+n)
\times\Biggl (\phi_{m+n}(f)\\
&+(-1)^{m-1}{m+n\choose m}\sum_{k=1}^{\lfloor m/2\rfloor}
\frac{n}{m+n-2k}{m\choose 2k}\frac{B_{2k}B_{m+n-2k}}{B_{m+n}}\frac{\phi_{2k}(f)}{f^{m+n-2k}}\\
&+(-1)^{n-1}{m+n\choose n}\sum_{k=1}^{\lfloor n/2\rfloor}
\frac{m}{m+n-2k}{n\choose 2k}\frac{B_{2k}B_{m+n-2k}}{B_{m+n}}\frac{\phi_{2k}(f)}{f^{m+n-2k}}
\Biggr ).
\end{align*}
In particular, $M(\overrightarrow{(m,n)},f)$ is asymptotic to $\zeta(m+n)\phi_{m+n}(f)$ 
as $f$ goes to infinity. 
\end{theorem}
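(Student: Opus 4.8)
The plan is to specialize Theorem \ref{mainresultgeneralization} to the case $n=2$ with $\overrightarrow{m}=(m,n)$, so that $2s=m+n$, and then to compute the coefficients $r_{\overrightarrow{(m,n)},2k}$ explicitly from the generating function defining $T_{\overrightarrow{(m,n)}}(X)$. First I would write down
\begin{equation*}
T_{\overrightarrow{(m,n)}}(X)
=\left [z^{m+n}\right ]
\left (\sum_{j\geq 0}(-1)^j\frac{2^{2j}B_{2j}}{(2j)!}X^{2j}z^{2j}\right )
\left (z^m\cot^{(m-1)}(z)\right )\left (z^n\cot^{(n-1)}(z)\right ),
\end{equation*}
using the expansion \eqref{seriesderivativescot} for each of the two factors $z^m\cot^{(m-1)}(z)$ and $z^n\cot^{(n-1)}(z)$. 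Extracting the coefficient of $z^{m+n}$ is a finite convolution: the $X$-factor contributes a term in $z^{2k_0}$ (for $k_0\geq 0$), the factor $z^m\cot^{(m-1)}(z)$ contributes either the constant $(-1)^{m-1}(m-1)!$ (the ``$k_1=0$'' term, which only survives when $m$ is even is not required — it survives whenever the constant is picked) or a term in $z^{2k_1}$ with $k_1\geq m/2$, and similarly for the third factor. Since $m+n$ is even and at most two of the three contributions can be ``non-constant'' without overshooting the degree, the only contributing configurations are: both cotangent factors constant (giving the term proportional to $X^{m+n}$, i.e.\ $2k_0=m+n$), or exactly one cotangent factor non-constant and the other constant.

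Next I would organize these two families of terms. When the second factor is constant $(-1)^{m-1}(m-1)!$ and the third factor contributes its $z^{2k_2}$-term with $2k_2\geq n$, we need $2k_0+2k_2=m+n$, so $2k_2=m+n-2k_0$ ranges over $n\leq 2k_2\leq m+n$, i.e.\ $0\leq k_0\leq m/2$; writing $k:=k_0$ and using \eqref{seriescotbis}, \eqref{seriesderivativescot} this produces, after dividing out the leading normalization
$(-1)^{s-2}\tfrac{2^{2s}B_{2s}}{(2s)!}(m-1)!(n-1)!$ with $s=(m+n)/2$, exactly the family of coefficients
$(-1)^{n-1}\binom{m+n}{n}\tfrac{m}{m+n-2k}\binom{n}{2k}\tfrac{B_{2k}B_{m+n-2k}}{B_{m+n}}$
attached to $X^{2k}$ for $1\leq k\leq\lfloor n/2\rfloor$ — note the ``$k=0$'' contribution here merges into the $X^{m+n}$ term and the $R_{\overrightarrow{m}}(1)=0$ normalization of Lemma \ref{TmSmgeneralization}. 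Symmetrically, the configuration with the third factor constant and the second non-constant yields the family with $m$ and $n$ swapped, i.e.\ the coefficients $(-1)^{m-1}\binom{m+n}{m}\tfrac{n}{m+n-2k}\binom{m}{2k}\tfrac{B_{2k}B_{m+n-2k}}{B_{m+n}}$ for $1\leq k\leq\lfloor m/2\rfloor$. Adding both families gives $\sum_{k=1}^{s-1}r_{\overrightarrow{(m,n)},2k}X^{2k}$ in the notation of Theorem \ref{mainresultgeneralization}.

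Finally I would plug these coefficients into the conclusion of Theorem \ref{mainresultgeneralization}: since $n=2$ here (the number of $L$-functions), the $\zeta(2s)=\zeta(m+n)$ main term together with the sum $\sum_{k=1}^{s-1}r_{\overrightarrow{(m,n)},2k}\phi_{2k}(f)/f^{2s-2k}$ is precisely the displayed formula, and the extra ``$m_1=\cdots=m_n=1$'' term is absent because we have excluded $(m,n)=(1,1)$. The asymptotic statement is then immediate from $\phi_{2k}(f)/f^{2s-2k}\to 0$ (since $2s-2k\geq 1$ and $\phi_{2k}(f)$ is bounded) while $\phi_{2s}(f)\to 1$, so $M(\overrightarrow{(m,n)},f)\sim\zeta(m+n)\phi_{m+n}(f)$. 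I expect the main obstacle to be purely bookkeeping: carefully matching the combinatorial factors coming out of \eqref{seriesderivativescot} — where the $z^{2k}$-coefficient of $z^m\cot^{(m-1)}(z)$ is $(-1)^k\tfrac{2^{2k}B_{2k}}{2k\,(2k-m)!}$ — against the binomial coefficients $\binom{m}{2k}$ and the factor $\tfrac{n}{m+n-2k}$ in the target formula, i.e.\ verifying the identity
\begin{equation*}
\frac{(-1)^{k}2^{2k}B_{2k}}{2k\,(2k-n)!}\cdot(-1)^{m-1}(m-1)!\cdot\frac{(-1)^{k_0}2^{2k_0}B_{2k_0}}{(2k_0)!}
\ \Big/\ \Bigl((-1)^{s-2}\tfrac{2^{2s}B_{2s}}{(2s)!}(m-1)!(n-1)!\Bigr)
=(-1)^{n-1}\binom{m+n}{n}\frac{m}{m+n-2k_0}\binom{n}{2k}\frac{B_{2k}B_{m+n-2k}}{B_{m+n}},
\end{equation*}
which unwinds to a clean factorial rearrangement once one substitutes $2k_0+2k=m+n$; no genuinely new idea beyond Lemma \ref{TmSmgeneralization} is needed.
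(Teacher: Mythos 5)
Your route is exactly the paper's: specialize Theorem \ref{mainresultgeneralization} (i.e.\ Lemmas \ref{M(m,f)generalization} and \ref{TmSmgeneralization} plus Lemma \ref{Rk} and \eqref{zeta(2m)}) and compute $T_{\overrightarrow{(m,n)}}(X)$ by extracting the coefficient of $z^{m+n}$ from the product of \eqref{seriescotbis} with two copies of \eqref{seriesderivativescot}; the paper's proof does precisely this and just displays the resulting polynomial. Your outline is structurally correct, but the bookkeeping you propose to verify is mislabelled and, as written, would fail. In the configuration ``$m$-factor constant, $n$-factor non-constant'' one has $2k_2=m+n-2k_0\geq n$, hence $2k_0\leq m$, and the factorials produced are $(m-1)!$ (from the constant) and $(2k_2-n)!=(m-2k_0)!$ (from the non-constant factor); after dividing by the leading normalization this gives the family $(-1)^{m-1}\binom{m+n}{m}\frac{n}{m+n-2k_0}\binom{m}{2k_0}\frac{B_{2k_0}B_{m+n-2k_0}}{B_{m+n}}$ attached to $X^{2k_0}$ for $1\leq k_0\leq\lfloor m/2\rfloor$, i.e.\ the sum in the theorem indexed by $\binom{m}{2k}$ --- not, as you assert, the family indexed by $\binom{n}{2k}$ over $1\leq k\leq\lfloor n/2\rfloor$ (your own range statement $0\leq k_0\leq m/2$ already contradicts that attribution). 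Accordingly, the displayed ``target identity'' is false as stated: its right-hand side contains $\binom{n}{2k}$ with $2k=m+n-2k_0\geq n$, so it vanishes except when $2k_0=m$, and the correct right-hand side is $(-1)^{m-1}\binom{m+n}{m}\frac{n}{m+n-2k_0}\binom{m}{2k_0}\frac{B_{2k_0}B_{m+n-2k_0}}{B_{m+n}}$. Since the two configurations are both summed and the theorem's two families are exchanged under $m\leftrightarrow n$ (note $(-1)^{m-1}=(-1)^{n-1}$ and $\binom{m+n}{m}=\binom{m+n}{n}$ here), this cross-attribution does not alter the final formula, but the identity you would actually sit down to check must be relabelled before it ``unwinds'' as claimed.

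A second, smaller slip: your claim that the only contributing configurations are ``both cotangent factors constant'' or ``exactly one non-constant'' omits, when $m$ and $n$ are both even, the configuration $k_0=0$, $2k_1=m$, $2k_2=n$ with both cotangent factors non-constant; likewise the $k_0=0$ terms of the one-non-constant families do not ``merge into the $X^{m+n}$ term'' but feed the constant coefficient of $T_{\overrightarrow{(m,n)}}(X)$ (this is the term $\frac{1+(-1)^m}{2}\frac{1+(-1)^n}{2}\frac{B_mB_n}{mn}$ visible in the paper's display, up to its evident typo). This is harmless for the final statement, because only the coefficients $r_{\overrightarrow{m},2k}$ with $k\geq 1$ enter Theorem \ref{mainresultgeneralization} (the constant is annihilated by $\sum_{d\mid f}\mu(f/d)=0$ for $f>2$), but it does belong to $T_{\overrightarrow{(m,n)}}(X)$ and to the check that $R_{\overrightarrow{(m,n)}}(1)=0$, so it should not be declared impossible. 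With these two corrections your argument coincides with the paper's proof.
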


\begin{proof}
Use Lemma \ref{TmSmgeneralization}, 
notice that 
\begin{align*}
T_{\overrightarrow{(m,n)}}(X)
=2^{m+n}
\times&\Biggl ((-1)^{\frac{m+n}{2}}\frac{B_{m+n}}{mn{m+n\choose n}}X^{m+n}\\
&-(-1)^{\frac{m-n}{2}}\sum_{k=1}^{\lfloor m/2\rfloor}
\frac{B_{2k}B_{m+n-2k}}{m(m+n-2k)}{m\choose 2k}X^{2k}\\
&-(-1)^{\frac{n-m}{2}}\sum_{k=1}^{\lfloor n/2\rfloor}
\frac{B_{2k}B_{m+n-2k}}{n(m+n-2k)}{n\choose 2k}X^{2k}\\
&+(-1)^{\frac{m+n}{2}}\frac{1+(-1)^m}{2}\frac{1+(-1)^m}{2}\frac{B_mB_n}{mn}
\Biggr ).
\end{align*}
and finally use Lemma \ref{M(m,f)generalization}.
\end{proof}

Our explicit formula for $M(\overrightarrow{(m,n)},f)$ is much simpler 
than the ones given in \cite[Theorem 1.1]{LZ} and \cite[Theorem 1.2]{OO}. 
For example, the first term in the formula given \cite[Theorem 1.1]{LZ} 
is 
$$(-1)^{(m-n)/2}\frac{(2\pi)^{m+n}}{2\cdot m!\cdot n!}\phi_{m+n}(f)
\sum_{a=0}^m\sum_{b=0}^nB_{m-a}B_{n-b}
\frac{{m\choose a}{n\choose b}}{a+b+1}.$$ 
By \eqref{zeta(2m)}, 
this first term being equal to the one $\zeta(m+n)\phi_{m+n}(f)$ of Theorem \ref{M(m,n;f)} 
is tantamount to saying that 
$B_{m+n}=(-1)^n{m+n\choose n}\sum_{a=0}^m\sum_{b=0}^nB_{m-a}B_{n-b}$. 
This is probably correct and at least we checked it for several values of $m$ and $n$.

Notice that for $m\geq 2$ we have $M(\overrightarrow{(m,m)},f) =M(m,f)$ 
and we recover the formula given in Theorem \ref{mainresult}.

\subsection{Fourth example.} 
For $m,n\geq 1$, Theorem \ref{mainresultgeneralization} gives a formula for
$$M(\overrightarrow{(m,n,m+n)},f)
=\frac{4}{\phi(f)^2}
\sum_{\chi_1(-1)=(-1)^m}\sum_{\chi_2(-1)=(-1)^n}L(m,\chi_1)L(n,\chi_2)\overline{L(m+n,\chi_1\chi_2)}$$
much simpler than the one given in \cite[Theorem 1.1]{OO}.

\bibliography{central}

\end{document}